\newtheorem{lemma}{Lemma}
\newtheorem{theorem}{Theorem}
\newtheorem{remark}{Remark}
\newcommand{\eps}{\varepsilon}
\newcommand{\bL}{\mathbb L}
\newcommand{\N}{\mathbb N}
\renewcommand{\P}{\mathbb P}
\newcommand{\R}{\mathbb R}
\newcommand{\Z}{\mathbb Z}
\title{A note on the antisymmetry in the speed of a random walk in reversible dynamic random environment}
\author{Oriane Blondel}
\begin{document}

\maketitle
\begin{abstract}
In this short note, we prove that $v(-\eps)=-v(\eps)$. Here, $v(\eps)$ is the speed of a one-dimensional random walk in a dynamic \emph{reversible} random environment, that jumps to the right (resp. to the left) with probability $1/2+\eps$ (resp. $1/2-\eps$) if it stands on an occupied site, and vice-versa on an empty site. We work in any setting where $v(\eps), v(-\eps)$ are well-defined, \emph{i.e.\@} a weak LLN holds. The proof relies on a simple coupling argument that holds only in the discrete setting. 
\end{abstract}
\section{Introduction}

We consider the so-called ``$\eps$--random walk'': a random walk in one-dimensional dynamic random environment with two values that jumps to the right (resp. to the left) with probability $1/2+\eps$ (resp. $1/2-\eps$) if it stands on an occupied site, and vice-versa on an empty site (Figure \ref{fig:rw}).

\begin{figure}[h]
\label{fig:rw}
\begin{center}
\includegraphics[scale=.5]{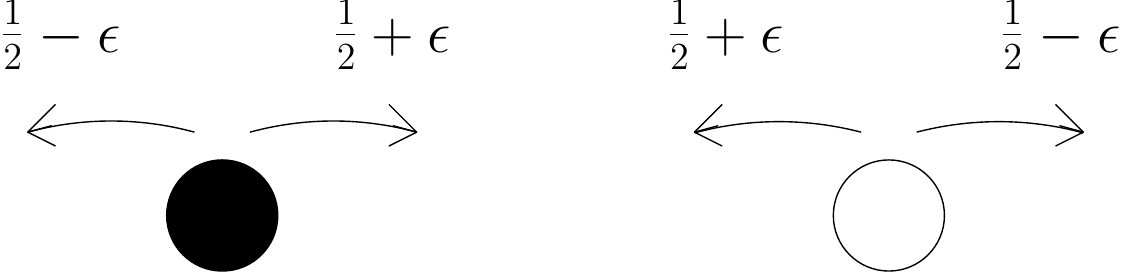}
\caption{Jump rates for the $\eps$--random walk.}
\end{center}
\end{figure}

A lot of energy has been devoted to describing the behavior of this $\eps$--RW in various random environments, mainly to find whether it satisfies the usual limit theorems (LLN, CLT...). This is generally a hard problem, since the environment seen from the $\eps$--RW is highly non-stationary (even when the environment is). Additionally, it belongs to the class of nestling random walks, which do not have an a-priori preferred direction. It is possible to find settings in which the LLN does not hold. \cite[Section 9]{cavalinhos} proposes an example in which space-time traps lead the random walk into longer and longer stretches of motion with drifts to the right or the left. Cases in which the LLN has been shown include perturbative regimes (small $\eps$) \cite{ABF18}, well-known environments like the exclusion process \cite{HS15,HKT20} or the contact process \cite{MV15}, uniform mixing hypotheses \cite{CZ04,AdHR11,RV13}, or fast enough decay of correlations \cite{cavalinhos}.

When the random walk does satisfy a law of large numbers, let us call $v(\eps)$ its asymptotic speed (we will simply say that $v(\eps)$ is well-defined). 
We observed in \cite{ABF16} that, if the environment is given by a reversible Markov process with positive spectral gap, and $|\eps|$ is smaller than the spectral gap, $v$ --in addition to being well-defined-- satisfies the antisymmetry property
\begin{equation}\label{antisym}
v(-\eps)=-v(\eps).
\end{equation}
This property also holds in higher dimensions for random walks with a certain symmetry property (Assumption 3 in \cite{ABF16}). Simulations moreover suggest that this property extends out of the perturbative regime of \cite{ABF16} (where the law of large numbers was not yet even established; see Figure 3 in \cite{ABF16}). The proof relies on an expansion of the speed in $\eps$, in which the terms of even degrees are shown to cancel due to reversibility.

It is worth pointing out that this antisymmetry does not seem to follow from obvious symmetry properties of the system. Indeed, it is tempting at first sight to think that the $(-\eps)$--random walk reversed in time should have the same distribution as the $\eps$--random walk, from which \eqref{antisym} would immediately follow. But closer inspection reveals that this is not true and those two processes have very different trajectories in general (see for instance Figure 4 in \cite{ABF16}). Rather, \eqref{antisym} holds iff the speed of the $\eps$--random walk is the same in the reversible environment and in its image under the mirror symmetry $x\mapsto -x$. While this property is obvious for reversible systems invariant under mirror symmetry (e.g.\@ simple symmetric exclusion process), there exist reversible, translation invariant processes which have no mirror-invariance. One that has been extensively studied is the East model \cite{eastphys,eastrecent} , in which particles appear (resp.\@ disappear) at site $x$ at rate $p$ (resp.\@ $1-p$), but only if $x+1$ is empty. This dynamics is reversible w.r.t.\@ the product Bernoulli measure of density $p$ (it is easy to check detailed balance since the constraint required to update $\eta(x)$ does not involve $\eta(x)$), and clearly non mirror-symmetric. In fact, any kinetically constrained model with non-symmetric constraint would yield an example where the result below is non-empty.

To state our main result, let $(X^\eps_t)_{t\geq 0}$ be the trajectory of a $\eps$--RW started from $0$. A proper construction of this object is given in Section~\ref{s:setting}.

\begin{theorem}\label{t:main}
	Assume the environment is a translation invariant and reversible Markov process. Moreover, assume that $X^\eps$ and $X^{-\eps}$ satisfy a weak law of large numbers, \emph{i.e.\@} there exist $v(\eps),v(-\eps)$ such that
	
	\begin{align}\label{e:LLN}
	\frac{1}{t}X^{\pm \eps}_t\underset{t\rightarrow \infty}{\overset{\P}{\longrightarrow}}v(\pm \eps).
	\end{align} Then
	\begin{equation}
	v(-\eps)=-v(\eps).
	\end{equation}
	\end{theorem}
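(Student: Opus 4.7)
\medskip
\noindent\textbf{Proof plan.} The plan is to prove the theorem by constructing a measure-preserving involution $\Phi$ on joint discrete-time trajectories that (essentially) conjugates the $\eps$--RW law with the $(-\eps)$--RW law while negating the final displacement. Given $(\eta_\cdot, X_\cdot)_{0\le t\le T}$ with $X_0 = 0$, set
$$
\Phi(\eta, X) := (\tilde\eta, \tilde X), \qquad \tilde X_t := X_{T-t} - X_T, \qquad \tilde\eta_t(y) := \eta_{T-t}(y + X_T).
$$
This is an involution, sending $X_T$ to $\tilde X_T = -X_T$, and preserves the walker--environment correspondence $\tilde\eta_t(\tilde X_t) = \eta_{T-t}(X_{T-t})$. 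The goal is to show that $\Phi_*\P^\eps$ equals $\P^{-\eps}$, which gives $X^\eps_T \stackrel{d}{=} -X^{-\eps}_T$ and hence $v(-\eps) = -v(\eps)$ via the weak LLN (the bounded increments $|X^{\pm\eps}_T/T|\le 1$ upgrade the convergence in probability to convergence of expectations, so this equality in distribution is more than enough).

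The identification of $\Phi_*\P^\eps$ with $\P^{-\eps}$ rests on three ingredients. First, translation invariance of the environment law $\mu$ and of its transition kernel $Q$ absorbs the random shift by $X_T$, so $\tilde\eta_0\stackrel{d}{=}\eta_0$ marginally and similarly for joint objects. Second, reversibility, applied as $\mu(\eta)Q(\eta,\eta') = \mu(\eta')Q(\eta',\eta)$ and telescoped over the $T$ environment transitions, rewrites $\mu(\eta_0)\prod_{t} Q(\eta_t,\eta_{t+1})$ as $\mu(\tilde\eta_0)\prod_{s}Q(\tilde\eta_s,\tilde\eta_{s+1})$. Third, the sign flip $\xi_{t+1} = -\tilde\xi_{T-t}$, combined with the position--environment correspondence above, turns the $\eps$--walker rate factors $\tfrac12 + \xi_{t+1}(2\eta_t(X_t)-1)\eps$ into $(-\eps)$-shaped factors $\tfrac12 - \tilde\xi_s(2\tilde\eta_s(\tilde X_s)-1)\eps$.

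The delicate point, which is precisely where the ``discrete setting'' is essential, is a one-step discrepancy in the transformed walker factor: it reads the environment at the post-jump state $(\tilde\eta_s, \tilde X_s)$, whereas the canonical $(-\eps)$--RW reads at the pre-jump state $(\tilde\eta_{s-1}, \tilde X_{s-1})$. Time reversal of a synchronous discrete update swaps the natural order ``walker-then-environment'' into ``environment-then-walker.'' I would close this gap by invoking stationarity of the environment (implied by reversibility): shifting the env index by a single step preserves the joint walker--environment law, so both conventions yield $(-\eps)$--RWs with the same asymptotic speed; alternatively, one fixes the convention of Section~\ref{s:setting} so that $\Phi$ maps one law exactly onto the other. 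In continuous time this sub-step distinction evaporates and no analogous simple coupling is available, which is why the whole argument is confined to the discrete setting.
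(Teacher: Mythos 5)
Your central claim, that the time reversal $\Phi$ pushes the $\eps$--RW law onto the $(-\eps)$--RW law (so that $X^\eps_T\overset{d}{=}-X^{-\eps}_T$), is exactly the ``tempting at first sight'' argument that the introduction of the paper explicitly rejects: the time-reversed $\eps$--walk and the $(-\eps)$--walk have genuinely different laws in general. The failure is located precisely at the point you flag as ``delicate'' and propose to absorb by stationarity, but it is not a convention or re-indexing issue. Writing the quenched weight of a trajectory as $\prod_t p^\eps(\eta_t(X_t),X_{t+1}-X_t)$ and applying $\Phi$, each factor becomes $p^{-\eps}\bigl(\tilde\eta_{s+1}(\tilde X_{s+1}),\tilde X_{s+1}-\tilde X_s\bigr)$: the reversed weight reads the environment at the \emph{arrival} site and \emph{arrival} time. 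These weights do not even normalize as a one-step kernel --- if $\tilde\eta_{s+1}(\tilde X_s+1)=1$ and $\tilde\eta_{s+1}(\tilde X_s-1)=0$ the two outgoing weights sum to $1-2\eps\neq 1$ --- so $\Phi_*\P^\eps$ is not the law of any walk of the prescribed type; the conditional step distribution of the reversed walk given the environment carries a nontrivial reweighting by the future of the trajectory, and no single global shift of the environment index can remove the dependence on the arrival position. Consequently the distributional identity you aim for is not established (and is believed false in general for non-mirror-symmetric reversible environments such as the East model).

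The paper's proof circumvents any such identity in law for the reversed trajectory. It couples, through the \emph{same} array $A^\eps$, the forward walk $X$ started at $0$ and a \emph{fresh} walk $\widehat X$ run backwards from a suitably chosen point $x$ at time $N$, each reading the environment at its own current site and time; reversibility gives $\widehat X\overset{d}{=}X^{x,-\eps}$ (Lemma~\ref{rem}), a parity and nearest-neighbour argument shows the two trajectories cannot cross (Lemma~\ref{l:non-crossing}), and combining this with the weak LLN for both walks yields a contradiction if $v(\eps)+v(-\eps)>0$. This only ever compares speeds, never one-time marginals, which is why it succeeds where the time-reversal bijection does not. If you want to salvage your approach, you would have to replace the claimed equality $\Phi_*\P^\eps=\P^{-\eps}$ by a pathwise comparison of this kind; as it stands, the proposal has a genuine gap at its key step.
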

Checking that the weak law of large numbers holds is generally not easy. In \cite{cavalinhos}, it was established under the condition that the environment has polynomially decaying covariances with a high enough exponent. This includes Markov processes with positive spectral gap. The East model \cite{eastphys,eastrecent} is an example of such a model with no mirror symmetry. 

One point that the theorem above does not address is whether the limiting speed is non-zero. This is in general hard to settle. \cite{HS15} gives a nice example in perturbative settings where the update rates of the environment are either very large or very small. \cite{cavalinhos} also offers a sufficient criterion for non-zero speed, albeit rather limited in its application range. Theorem~\ref{t:main} does say that $v(\eps)$ is non-zero whenever $v(-\eps)$ is, which is in itself not obvious.

Let us note that, to the statistical physics community, the hidden symmetry of Theorem~\ref{t:main} in a reversible context may be reminiscent of Gallavotti-Cohen type results. However, it is unclear what the connection is, if it exists at all. Indeed, the action functional of the environment seen from the particle does not seem to give information about the displacement of the particle.

The proof presented here relies on a coupling valid only for the discrete time $\eps$--RW, which in turn allows to deduce the result for the continuous time. The coupling relies heavily on the fact that we consider nearest-neighbor trajectories in dimension $1$. However, as mentioned above, Theorem~\ref{t:main} is expected to hold also in higher dimensions and for random walks allowing longer range jumps under appropriate symmetry assumptions \cite{ABF16}. Outside the perturbative region, this is still an open problem.
%
%
%
\section{Setting and construction of the random walk}\label{s:setting}
We now give explicit constructions of the $\eps$--RW, in discrete and continuous time. In the following, $\bL=\R_+$ when we consider the continuous time setting, $\bL=\N$ in the discrete time setting.

\subsection{Environment}
The environment is given by a collection $\eta=(\eta_t(x))_{(x,t)\in\Z\times\bL}$ such that $\eta_t(x)\in\{0,1\}$ for all $(x,t)\in\Z\times\bL$. We assume 
  \begin{enumerate}
	\item \textbf{(Stationarity)} The distribution of the environment is invariant under space-time shifts, \emph{i.e.} 
	\begin{equation}
	\forall x\in\Z,t\in\bL,\quad (\eta_{t+s}(x+\cdot))_{s\in\bL}=(\eta_s(\cdot))_{s\in\bL}\quad \text{in distribution.}
	\end{equation}
	\item \textbf{(Reversibility)} The environment is given by a reversible process $\eta=(\eta_t)_{t\in\bL}$ on $\{0,1\}^{\Z}$, \emph{i.e.} 
	\begin{equation}\text{for all $T\in\bL\setminus\{0\}$,}\quad (\eta_{T-t})_{t\in[0,T]\cap \bL}=(\eta_t)_{t\in[0,T]\cap\bL}\quad \text{in distribution.}
	\end{equation}
	\end{enumerate}

\subsection{$\eps$--RW in discrete time}\label{s:discrete}

Let us consider a collection $U=(U_{x,n})_{x\in\Z,n\in\N}$ of iid $\mathcal{U}([0,1])$ random variables. With the environment $\eta$ and the collection $U$ we associate a set $(A^\eps_{x,n})_{x\in\Z,n\in\N}$ (that will prescribe the directions taken by the RW) in the following way:
\begin{equation}
A^\eps_{x,n}=\begin{cases}
+1& \text{if }\big(\eta_n(x)=1 \text{ and }U_{x,n}\leq \frac{1}{2}+\eps \big)\text{ or }\big(\eta_n(x)=0 \text{ and }U_{x,n}\leq \frac{1}{2}-\eps \big),\\
-1&\text{else}.
\end{cases}
\end{equation}

For $x\in\Z$, we denote by $(X^{x,\eps}_n)_{n\in\N}$ the $\eps$--RW started at $x$. It is built iteratively as follows:
\begin{enumerate}
	\item $X^{x,\eps}_0=x$;
	\item if $X^{x,\eps}_n=y$, $X^{x,\eps}_{n+1}=y+A^\eps_{x,n}$.
	\end{enumerate}

Note that by translation invariance of the environment, $X^{x,\eps}-x\overset{(d)}{=}X^{0,\eps}$ for all $x\in\Z$. Also, $X_n^{x,\eps}$ has the same parity as $n+x$. 
\subsection{$\eps$--RW in continuous time}\label{s:cont}

%
%

We could use a similar construction, but it will be more convenient to alter it a little. Let $U=(U_n)_{n\in\N}$ be a collection of iid $\mathcal{U}([0,1])$ random variables and $T=(T_n)_{n\in\N}$ a PPP($1$) on $\R_+$. Assume $(U,T,\eta)$ to be independent. The continuous time $\eps$--RW started at $x$, $(X^{x,\eps}_t)_{t\in\R_+}$  is built as follows:
\begin{enumerate}
	\item $X^{x,\eps}_0=x$;
	\item if $X^{x,\eps}_{t^-}=y$ and $y\notin\{T_{n},n\in\N\}$, $X^{x,\eps}_{t}=y$;
	\item if $X^{x,\eps}_{t^-}=y$ and $t=T_{n}$,  \begin{equation}X^{x,\eps}_{t}=\begin{cases}y+1 &\text{if }\big(\eta_t(y)=1 \text{ and }U_{n}\leq \frac{1}{2}+\eps \big)\text{ or }\big(\eta_t(y)=0 \text{ and }U_{n}\leq \frac{1}{2}-\eps \big),\\
	y-1&\text{else}.\end{cases}\end{equation}
\end{enumerate}

\section{Proof of Theorem~\ref{t:main} in the discrete setting}

A key remark is that the reversibility assumption allows to construct jointly a $\eps$--RW and a backwards $(-\eps)$--RW, that is a $(-\eps)$--RW evolving on the reversed environment process $(\eta_{N-n})_{n\leq N}$ for $N\in\N$.

In order to simplify the notations, let us fix $\eps\in [-1,1]$, and $N\in\N$. We let $X=X^{0,\eps}$, and for $x\in\Z$ we let $(\widehat X_n)_{n\leq N}$ be constructed as follows.
\begin{enumerate}
	\item $\widehat X_N=x$;
	\item for $n<N$, if $\widehat X_{N-n}=y$, $\widehat X_{N-n-1}=y-A^\eps_{y,N-n}$.
	\end{enumerate}
Note that the construction uses the same collection $A^\eps$ as the construction of $X$.

Reversibility of the environment clearly gives us the following property.
\begin{lemma}\label{rem}
	$(\widehat X_n)_{n\leq N}\overset{(d)}{=}(X^{x,-\eps}_n)_{n\leq N}$.
	\end{lemma}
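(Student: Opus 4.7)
The plan is to reindex the backward walk $\widehat X$ into a forward process and then identify its law as that of a $(-\eps)$-RW by combining two elementary symmetries: the one relating $-A^\eps$ to $A^{-\eps}$ under the measure-preserving involution $u\mapsto 1-u$, and the reversibility of the environment.

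First, set $Y_m:=\widehat X_{N-m}$ for $0\leq m\leq N$, so that the defining recursion for $\widehat X$ becomes $Y_0=x$ and $Y_{m+1}=Y_m-A^\eps_{Y_m,N-m}$. A direct inspection of the definition of $A^\eps$ shows that, for every realization of $\eta_n(x)\in\{0,1\}$, one has
\[
-A^\eps_{x,n}\big(\eta_n(x),U_{x,n}\big)\;=\;A^{-\eps}_{x,n}\big(\eta_n(x),1-U_{x,n}\big)
\]
outside the null event $\{U_{x,n}\in\{1/2-\eps,1/2+\eps\}\}$. Hence replacing $U_{x,n}$ by $1-U_{x,n}$ (still an iid uniform family, independent of $\eta$) simultaneously flips the sign of $A^\eps$ and turns $\eps$ into $-\eps$.

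Next I set $\tilde\eta_m(\cdot):=\eta_{N-m}(\cdot)$ and $\tilde U_{x,m}:=1-U_{x,N-m}$ for $m\in\{0,\dots,N\}$. Writing $\tilde A^{-\eps}$ for the $A^{-\eps}$-array built from $(\tilde\eta,\tilde U)$, the recursion rewrites as $Y_0=x$ and $Y_{m+1}=Y_m+\tilde A^{-\eps}_{Y_m,m}$, which is exactly the forward $(-\eps)$-RW recursion started at $x$ driven by $(\tilde\eta,\tilde U)$. By reversibility of $\eta$ on $[0,N]$ together with the iid uniform character of $U$ and its independence from $\eta$, the pair $(\tilde\eta,\tilde U)$ has the same joint distribution on this window as $(\eta,U)$. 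Consequently $(Y_m)_{m\leq N}$ has the same law as $(X^{x,-\eps}_m)_{m\leq N}$; reindexing back via $\widehat X_n=Y_{N-n}$ gives the claim.

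The argument is essentially bookkeeping. The only substantive input is the reversibility assumption, used exactly once to match the law of $(\tilde\eta_m)_{m\leq N}$ with that of $(\eta_m)_{m\leq N}$. The only point requiring mild care is the preservation of joint independence between the auxiliary uniforms $\tilde U$ and the reversed environment $\tilde\eta$, which is immediate from the original independence of $U$ and $\eta$ since $\tilde U$ is a deterministic function of $U$ alone.
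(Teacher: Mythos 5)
Your proof is correct and fills in, with the natural time-reversal/uniform-flip bookkeeping, exactly the argument the paper compresses into the single line ``this follows from the reversibility of $\eta$''; the key identity $-A^\eps(\eta,U)=A^{-\eps}(\eta,1-U)$ a.s. and the preservation of the joint law of $(\tilde\eta,\tilde U)$ are checked correctly. This is the same approach as the paper, just written out in full (and your final reindexing makes explicit the time-reversed sense in which the lemma's equality in distribution is meant and used).
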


\begin{proof}
	This follows from the reversibility of $\eta$.
	\end{proof}
It is also not difficult to check that $\widehat X$ and $X$ cannot cross, in the following sense.

\begin{lemma}\label{l:non-crossing}
	Let $X$ and $\widehat X$ be built using the same collection $A^\eps$. Then, if $\widehat X_0$ and $X_0$ have the same parity, 
	\begin{equation}
	(\widehat X_0-X_0)(\widehat X_N-X_N)\geq 0.
	\end{equation}
	\end{lemma}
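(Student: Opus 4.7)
The plan is to argue by contradiction. By the obvious symmetry $X \leftrightarrow \widehat X$, it suffices to preclude the situation $X_0 < \widehat X_0$ together with $X_N > \widehat X_N$, so I assume both. The same-parity hypothesis makes $D_n := X_n - \widehat X_n$ even for every $n$, and the nearest-neighbor updates force $D_{n+1} - D_n \in \{-2, 0, +2\}$. Hence for $D$ to change sign, the value $0$ must be attained; more specifically the first-crossing time
\[
n^* := \min\{n : X_n > \widehat X_n\}
\]
satisfies $D_{n^*-1} = 0$ and $D_{n^*} = 2$. Writing $y := X_{n^*-1} = \widehat X_{n^*-1}$, this means $X_{n^*} = y + 1$ and $\widehat X_{n^*} = y - 1$, and the forward rule for $X$ pins down $A^\eps_{y, n^*-1} = +1$.

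The heart of the argument is a backward induction on $k = 1, \dots, n^*$ establishing
\[
X_{n^*-k} = \widehat X_{n^*-k} = y - k + 1, \qquad A^\eps_{y-k+1,\, n^*-k} = +1.
\]
The base case $k=1$ is what was just derived. For the inductive step, the backward construction of $\widehat X$ gives
\[
\widehat X_{n^*-k-1} = \widehat X_{n^*-k} - A^\eps_{\widehat X_{n^*-k},\, n^*-k} = (y-k+1) - 1 = y - k,
\]
while the forward rule for $X$ leaves only two candidates, $X_{n^*-k-1} \in \{y-k,\, y-k+2\}$. Since $n^* - k - 1 < n^*$, the minimality of $n^*$ forces $X_{n^*-k-1} \leq \widehat X_{n^*-k-1} = y - k$, pinning down $X_{n^*-k-1} = y - k$ and $A^\eps_{y-k,\, n^*-k-1} = +1$. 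Taking $k = n^*$ now yields $X_0 = \widehat X_0 = y - n^* + 1$, contradicting $X_0 < \widehat X_0$.

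The subtle point---and the only real obstacle I see---is that a single crossing step $D_{n^*-1} = 0 \to D_{n^*} = 2$ cannot be ruled out locally: nothing prevents $A^\eps_{y, n^*-1} = +1$ and $A^\eps_{y-1, n^*} = -1$ from coexisting in the field. The non-crossing property is genuinely global, and the mechanism that makes the rigidity work is that at any meeting point $(y, n)$ the single value $A^\eps_{y, n}$ controls both the next step of $X$ and the previous step of $\widehat X$. A crossing therefore forces a diagonal cascade of further coincidences propagating backward in time all the way to $n=0$, which is incompatible with the strict initial separation $X_0 < \widehat X_0$. It is precisely this feature that is special to the nearest-neighbor one-dimensional setting.
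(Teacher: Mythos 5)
Your proof is correct and follows essentially the same route as the paper's: both locate a time at which the two walks meet and exploit the fact that at a meeting point $(y,n)$ the single value $A^\eps_{y,n}$ determines both $X_{n+1}$ and $\widehat X_{n-1}$, propagating the resulting rigidity backward to time $0$. The only cosmetic differences are that you start from the first crossing time and propagate exact equality of the two walks (the paper starts from the last meeting time before $N$ and propagates the inequality $\widehat X_m\geq X_m$), and that your reduction to one case should be attributed to spatial reflection rather than literally exchanging $X$ and $\widehat X$, which obey different (forward vs.\ backward) recursions.
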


\begin{proof}
Without loss of generality, let us assume $\widehat X_N>X_N$. Assume by contradiction that $X_0>\widehat X_0$. With our parity assumption, for any $n\leq N$, $\widehat X_n-X_n\in 2\Z$, and that difference only takes steps in $\{-2,0,2\}$. Let us look at the largest $n<N$ such that $\widehat X_n=X_n$. Let $y=X_n$. Under our assumptions, necessarily $\widehat X_{n+1}=y+1$ and $X_{n+1}=y-1$. In particular, $A^\eps_{y,n}=-1$, and therefore $\widehat X_{n-1}=y+1\geq X_{n-1}$. Now, if $\widehat X_{n-1}>X_{n-1}$, we reproduce the previous argument to show $\widehat X_{n-2}\geq X_{n-2}$ (unless $n-1$ is zero, in which case we stop). If $\widehat X_{n-1}=X_{n-1}=y+1$, $A^\eps_{y+1,n-1}=-1$, and we also end up with $\widehat X_{n-2}=y+2\geq X_{n-2}$. We can reproduce this argument until we reach time $0$ to get a contradiction.
	\end{proof}

With this non-crossing property, it becomes easy to show the antisymmetry property. 

\begin{proof}[Proof of Theorem~\ref{t:main} (discrete setting)]
	Without loss of generality, assume by contradiction that $\delta:=v(\eps)+v(-\eps)>0$. Fix $N$ large enough that 
	\begin{align}
	\P\left(\left|\frac{X_N^{\pm \eps}}{N}-v(\pm \eps)\right|\geq \delta/4\right)\leq 1/3.
	\end{align}

Define $\tilde x=\lfloor(v(\eps)-\delta/2)N\rfloor$. If $\tilde x$ has the same parity as $N$, let $x=\tilde x$. Let $x=\tilde x +1$ else. Define $X,\widehat X$ as above. Consider the event 
\begin{equation}
E=\left\{\frac{X_N}{N}\geq v(\eps)-\delta/4\text{ and }\frac{\widehat X_0}{N}-x\geq v(-\eps)-\delta/4\right\}.
\end{equation}

By Lemma~\ref{rem}, $\P(E)\geq 1/3$, but by Lemma~\ref{l:non-crossing}, $E$ is empty, since $X_N>\widehat X_N=x$ and $X_0<\widehat X_0$ on $E$. We have our contradiction.

	\end{proof}

\begin{remark}
	While the non-crossing property is elementary to check in the discrete time setting, it is not easy to find a coupling with the same property in the continuous time setting. One could think of associating independent Poisson clocks to every vertex of $\Z$, a direction $A^\eps_{x,n}$ with every clock ring and build $X,\widehat X$ in a similar fashion. However, the jumps starting from neighboring sites would not be simultaneous, and it is easy to find examples where crossings occur. Fortunately, once we have the result in the discrete time setting, we do not need to work hard to push it to the continuous time setting.
	\end{remark}
\section{Proof of Theorem~\ref{t:main} in the continuous setting}

Recall the construction of Section~\ref{s:cont}. Define for $n\in\N$, $\sigma_n=\eta_{T_n}$. Theorem~\ref{t:main} will be proved when we have checked the three points of the following lemma.

\begin{lemma}\label{l:cont}\begin{enumerate}
		\item The process $\sigma$ is translation invariant and reversible.
	\item If $X^\eps$ is the $\eps$--RW in continuous time defined on $\eta$ as in Section~\ref{s:cont}, $(X^{\eps}_{T_n})_{n\in\N}$ is equal in distribution to $\widetilde X^\eps$ the $\eps$--RW in discrete time defined on $\sigma$.
	\item $X^\eps$ satisfies the weak LLN \eqref{e:LLN} iff $\widetilde X^\eps$ does, with the same limit speed $v(\eps)$.
	\end{enumerate}
	\end{lemma}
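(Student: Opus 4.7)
The plan is to verify the three parts of Lemma~\ref{l:cont} in order, each leveraging the independence of $(U,T,\eta)$ and the structure of the Poisson point process.

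\textbf{Part 1.} Spatial translation invariance of $\sigma$ is inherited directly from that of $\eta$, since $T$ is independent of $\eta$: $\sigma_n(x+\cdot)=\eta_{T_n}(x+\cdot)\overset{(d)}{=}\eta_{T_n}(\cdot)$. For discrete-time shift invariance, I would use the renewal property of the PPP: conditional on $T_k$, the sequence $(T_{k+n}-T_k)_{n\geq 0}$ is again a PPP$(1)$ independent of $(T_0,\ldots,T_k)$, which combined with temporal stationarity of $\eta$ gives $(\sigma_{k+n})_n\overset{(d)}{=}(\sigma_n)_n$. Reversibility is the most delicate point. Working conditionally on $T$ (independent of $\eta$) and applying reversibility of $\eta$ at horizon $T_N$ yields
\begin{equation*}
(\sigma_{N-n})_{0\leq n\leq N}\overset{(d)}{=}\bigl(\eta_0,\eta_{T_N-T_{N-1}},\ldots,\eta_{T_N-T_0}\bigr).
\end{equation*}
Expressing both this sequence and the direct sequence $(\sigma_n)_{n}$ in terms of the inter-arrival gaps $\tau_i:=T_i-T_{i-1}$ (using temporal stationarity of $\eta$ on the direct side to shift away $T_0$), one sees that each joint law is a function of the gap vector alone, and involves either $(\tau_1,\ldots,\tau_N)$ or $(\tau_N,\ldots,\tau_1)$. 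Since these gaps are iid exponential, hence exchangeable, the two distributions coincide.

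\textbf{Part 2.} This is essentially a read-off from the construction of Section~\ref{s:cont}: $X^\eps$ is piecewise constant on $(T_{n-1},T_n)$ and at time $T_n$ performs a $\pm 1$ step determined by $\eta_{T_n}(X^\eps_{T_n^-})=\sigma_n(X^\eps_{T_n^-})$ and $U_n$ via exactly the same rule as in the discrete construction of Section~\ref{s:discrete} applied to the environment $\sigma$ and uniforms $(U_n)_n$. Hence the embedded chain $(X^\eps_{T_n})_{n}$ has the same law as $\widetilde X^\eps$.

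\textbf{Part 3.} Let $N_t=\#\{n\in\N:T_n\leq t\}$, so that $X^\eps_t=X^\eps_{T_{N_t}}$ once $t\geq T_0$, and by part 2 $(X^\eps_{T_n})_{n}\overset{(d)}{=}\widetilde X^\eps$. Writing
\begin{equation*}
\frac{X^\eps_t}{t}\overset{(d)}{=}\frac{\widetilde X^\eps_{N_t}}{N_t}\cdot\frac{N_t}{t},
\end{equation*}
and using the strong LLN for the PPP ($N_t/t\to 1$ a.s., $N_t\to\infty$ a.s.), a routine composition of convergence in probability with an a.s.\@ deterministic time change transfers the LLN in either direction with the same limit $v(\eps)$; the converse implication follows from the same identity evaluated along $t=T_n$ and $T_n/n\to 1$.

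The main obstacle is the reversibility step in part 1: one must correctly interleave three ingredients, namely independence of $T$ from $\eta$, time-reversibility of $\eta$ at a random but $\eta$-independent horizon, and exchangeability of the iid exponential interarrivals of the PPP. Once this is handled, parts 2 and 3 are straightforward consequences of the construction and classical limit theorems for Poisson processes.
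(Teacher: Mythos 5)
Your proof is correct and follows essentially the same route as the paper, which simply cites the translation invariance and reversibility of $\eta$ and of the PPP $T$, their independence, the read-off from the construction, and $T_n/n\to 1$; you have merely filled in the details (notably the exchangeability of the iid interarrival gaps for the reversibility step, which is the right way to make the paper's one-line claim precise).
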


\begin{proof}[Proof of Lemma~\ref{l:cont}]
	\begin{enumerate}
		\item This follows from the translation invariance and reversibility of $\eta$ and the PPP $T$, along with the independence of $T$ and $\eta$.
		\item This follows from the independence of $U,T,\eta$, and a construction in the discrete case analog to the one we use in Section~\ref{s:cont}.
		\item This is an immediate consequence of the fact that $T_n/n\underset{n\rightarrow\infty}{\longrightarrow}1$ almost surely.
		\end{enumerate}
	\end{proof}

\begin{proof}[Proof of Theorem~\ref{t:main} (continuous setting)]
	We assume that both $X^\eps$ and $X^{-\eps}$ satisfy the weak LLN \eqref{e:LLN}. By the third point of Lemma~\ref{l:cont}, $\widetilde X^\eps$ and $\widetilde X^{-\eps}$ satisfy \eqref{e:LLN} with respective speeds $v(\eps), v(-\eps)$. Therefore, the discrete time version of Theorem~\ref{t:main} applies to $\widetilde X^\eps$, $\widetilde X^{-\eps}$ and $v(-\eps)=-v(\eps)$.

\end{proof}

\bibliographystyle{plain}
\bibliography{biblio-antisym}

\end{document}